\newtheorem{theorem}{Theorem}
\newtheorem{conj}{Conjecture}
\theoremstyle{definition}
\theoremstyle{remark}
\numberwithin{equation}{section}
\begin{document}
\title[Farey sequence and Graham's conjectures]
 {Farey sequence and Graham's conjectures}

\author{Liuquan Wang}
\address{School of Mathematics and Statistics, Wuhan University, Wuhan 430072, Hubei, People's Republic of China}
\email{wanglq@whu.edu.cn;mathlqwang@163.com}

\subjclass[2010]{Primary 11A05, 11B57}

\keywords{Farey sequences; Graham's conjectures; greatest common divisors}

\begin{abstract}
Let ${F}_{n}$ be the Farey sequence of order $n$. For $S \subseteq {F}_n$ we let $\mathcal{Q}(S) = \left\{x/y:x,y\in S, x\le y \, \, \textrm{and} \, \,  y\neq 0\right\}$. We show that if $\mathcal{Q}(S)\subseteq F_n$, then $|S|\leq n+1$. Moreover, we prove that in any of the following cases: (1) $\mathcal{Q}(S)=F_n$; (2) $\mathcal{Q}(S)\subseteq F_n$ and $|S|=n+1$, we must have $S = \left\{0,1,\frac{1}{2},\ldots,\frac{1}{n}\right\}$ or $S=\left\{0,1,\frac{1}{n},\ldots,\frac{n-1}{n}\right\}$ except for $n=4$, where we have an additional set $\{0, 1, \frac{1}{2}, \frac{1}{3}, \frac{2}{3}\}$ for the second case. Our results are based on Graham's GCD conjectures, which have been proved by Balasubramanian and  Soundararajan.
\end{abstract}

\maketitle

\section{Introduction and Main Results}
Given a positive integer $n$, the Farey sequence $F_n$ is the set of rational numbers $a/b$ with $0\leq a \leq b \leq n$ and $(a,b)=1$. Throughout this paper, we use the notation $(a,b)$ and $[a,b]$ to denote the greatest common divisor and least common multiple of $a$ and $b$, respectively. The Farey sequence is arranged in increasing order. For instance,
$$F_5=\left\{\frac{0}{1},\frac{1}{5},\frac{1}{4},\frac{1}{3},\frac{2}{5},\frac{2}{3},\frac{3}{4},\frac{4}{5},\frac{1}{1}\right\}.$$

In this paper, we present some interesting properties of Farey sequences. For any set $S$ of real numbers, we define
\[\mathcal{Q}(S) = \left\{\frac{x}{y}:x,y\in S,  x\le y  \, \, \textrm{and} \, \, y\neq 0\right\}.\]
In particular, if $S=\{0\}$ then we agree that $\mathcal{Q}(S)=\{0\}$.
 We are mainly interested in the following  question: can we find all subsets $S \subseteq {F}_n$, such that $\mathcal{Q}(S)={F}_{n}$?
It turns out that this question is closely related to Graham's greatest common divisor problem. In 1970, Graham \cite{Graham} proposed the following conjecture.
\begin{conj}\label{Gthm}
Let ${{a}_{1}},{{a}_{2}},\cdots ,{{a}_{n}}$ be distinct positive integers, we have
\[\underset{i,j}{\mathop{\max }}\,\frac{{{a}_{i}}}{({{a}_{i}},{{a}_{j}})}\ge n.\]
\end{conj}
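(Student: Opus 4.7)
My plan is a proof by contradiction. Assume $a_i/(a_i,a_j) \le n-1$ for all $i,j$, and work toward a contradiction.

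A first reduction: the ratio $a_i/(a_i,a_j)$ is unchanged when every $a_k$ is scaled by a common factor, so we may assume $(a_1,\ldots,a_n)=1$. It is also worth recording a dual formulation: if $L=[a_1,\ldots,a_n]$ and $b_i=L/a_i$, then a short calculation gives $a_i/(a_i,a_j) = b_j/(b_i,b_j)$, and the $b_i$ are themselves distinct positive integers. Thus any pigeonhole argument on the maximum of the $a_i$ can be re-run on the maximum of the $b_i$, i.e., on the minimum of the $a_i$, giving a second independent handle on the configuration.

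Now let $a_n$ be the maximum. The assumed bound says that for every $i<n$, the cofactor $a_n/(a_n,a_i)$ is a divisor of $a_n$ lying in $\{1,2,\ldots,n-1\}$; the value $1$ is impossible (it would force $a_n\mid a_i$ with $a_i<a_n$), so this cofactor lies in $\{2,3,\ldots,n-1\}$, giving at most $n-2$ possible values of $(a_n,a_i)$. Since there are $n-1$ indices $i<n$, pigeonhole produces distinct $i,j<n$ with $(a_n,a_i)=(a_n,a_j)=d$. Writing $a_n=dk$, $a_i=d\alpha$, $a_j=d\beta$, this collision yields $(\alpha,k)=(\beta,k)=1$, the localisation $\alpha,\beta\in(k/n,k]$, and a Graham-type inequality $\alpha/(\alpha,\beta)\le n-1$ for the reduced pair. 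Iterating this step (including its analog on the dual $b_i$-side, which feeds back information about the \emph{minimum} of the $a_k$), one hopes to force many gcd-collisions simultaneously, concentrating the prime factorisations of the $a_k$ to the point of producing either a forbidden repetition $a_i=a_j$ or a prime dividing every $a_k$, contradicting $(a_1,\ldots,a_n)=1$.

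The hard part, and the reason this conjecture resisted proof from 1970 until the work of Balasubramanian and Soundararajan, is precisely upgrading such local pigeonhole information to a global contradiction. The extremal family $\{1,2,\ldots,n\}$ already saturates the bound $n$, so any valid argument must be almost tight at this configuration. A single collision from one pigeonhole step is plainly too weak on its own; closing the gap seems to require fairly sharp analytic estimates on the number of divisors of $a_n$ lying in the short interval $(a_n/n,a_n]$, together with a separate treatment of the regime where $a_n$ has very few distinct prime factors versus the regime where it has many. Supplying this analytic input is the main obstacle I would expect to encounter.
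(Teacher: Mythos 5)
There is a genuine gap, and you identify it yourself: everything after the pigeonhole step is phrased as ``one hopes to force many gcd-collisions'' and ``the main obstacle I would expect to encounter.'' The preliminary reductions you record are all correct --- the scaling invariance, the duality $a_i/(a_i,a_j)=b_j/(b_i,b_j)$ for $b_i=L/a_i$ (which follows from $(L/a_i,L/a_j)=L/[a_i,a_j]$), the fact that the cofactor $a_n/(a_n,a_i)$ must lie in $\{2,\ldots,n-1\}$ so that pigeonhole forces a gcd-collision among the $n-1$ indices $i<n$, and the localisation $a_i>a_n/n$. But none of this is close to a contradiction. A single collision (or an unquantified iteration of collisions) does not ``concentrate the prime factorisations''; the sentence asserting that it eventually produces a repetition $a_i=a_j$ or a common prime factor is a restatement of the goal, not an argument for it. Moreover the extremal family $\{1,2,\ldots,n\}$ satisfies $\max_{i,j} a_i/(a_i,a_j)=n$ exactly, so the hypothesis $\le n-1$ fails only by one unit on natural candidates; any bookkeeping of collisions that is not essentially sharp cannot distinguish the two situations, and your sketch contains no quantitative mechanism at all.

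For calibration: the paper does not prove this statement either. It appears as Conjecture \ref{Gthm} and is invoked as a black box, the proof being due to Balasubramanian and Soundararajan \cite{BS}; the paper explicitly describes that proof as ``relatively long and complicate'' and based on deep analytic methods. Your elementary reductions are essentially those already present in the 1970s literature on this problem (Winterle \cite{Winterlez}, V\'elez \cite{Velez}, Boyle \cite{Boyle}), which sufficed only for special cases such as $n$ prime or the presence of a large prime factor. So your diagnosis of where the difficulty lies is accurate, but what you have written establishes none of the statement: the entire core of the proof is missing.
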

Let $M_n=\mathrm{l.c.m.}\{1,2,\dots,n\}$. It was pointed out in \cite{Velez} that Graham also made the following stronger conjecture.
\begin{conj}\label{conj2}
Assume $a_1<a_2<\cdots <a_n$,
\[\mathrm{g.c.d.}\{a_1,a_2,\dots, a_n\}=1 \quad \textrm{and} \quad \underset{i,j}{\mathop{\max }}\frac{a_i}{(a_i,a_j)}=n.\]
Then $\{a_1,a_2,\dots, a_n\}$ can only be $\{1,2,\dots, n\}$ or $\{\frac{M_n}{n},\frac{M_n}{n-1},\dots, \frac{M_n}{1}\}$ except for $n=4$, where we have the additional sequence $\{2,3,4,6\}$.
\end{conj}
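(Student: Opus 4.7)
The plan is to strengthen the arguments underlying Conjecture~\ref{Gthm} so as to track exactly when the bound $n$ is attained. Assume $a_1<\cdots<a_n$ are distinct positive integers with $\gcd(a_1,\dots,a_n)=1$ and $\max_{i,j} a_i/(a_i,a_j)=n$. The hypothesis that $a_i/(a_i,a_j)\le n$ for \emph{every} pair $(i,j)$ is highly restrictive. As a first reduction, if a prime $p>n$ divides some $a_{i_0}$, then since the overall $\gcd$ is $1$, some $a_j$ is not divisible by $p$, forcing $a_{i_0}/(a_{i_0},a_j)$ to be a multiple of $p>n$, which is impossible. Combining this with the spread bound $v_p(a_i)-v_p(a_j)\le \lfloor\log_p n\rfloor$ for each prime $p\le n$, one sees that every $a_i$ divides $M_n$, so the search is confined to a finite set.

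Next I would exploit the involution $a_i\mapsto L/a_i$, where $L=\mathrm{lcm}\{a_1,\dots,a_n\}$. Using the identity $\gcd(L/a,L/b)=L/\mathrm{lcm}(a,b)$ valid when $a,b\mid L$, one checks that the $b_i:=L/a_i$ are distinct, have $\gcd$ equal to $1$, and satisfy $\max_{i,j} b_i/(b_i,b_j)=n$. This involution interchanges the two canonical families $\{1,2,\dots,n\}$ and $\{M_n/n,\dots,M_n/1\}$, so after applying it we may assume $1\in\{a_i\}$. From there the heart of the proof is a prime-by-prime valuation analysis combined with the divisibility $a_i\mid M_n$: one argues by induction on $n$, using pigeonhole on the prime-valuation vectors, that the only way to fit $n$ distinct such integers while saturating the inequalities is to take $\{1,2,\dots,n\}$, save for the sporadic configuration $\{2,3,4,6\}$ at $n=4$. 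This anomaly arises because $M_4=12$ admits the extra factorization $2\cdot 6=3\cdot 4$, which accidentally yields a third saturating arrangement.

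The main obstacle will be this final combinatorial classification step: ruling out further sporadic examples for $n\ge 5$. The $n=4$ anomaly shows that the smoothness and spread constraints can admit unexpected configurations, and verifying that no analogous accident occurs for any larger $n$ demands a delicate joint analysis of valuations at several primes simultaneously, together with a careful handling of the cases where $n$ is a prime power versus having several small prime factors. This is exactly the hard core of Balasubramanian and Soundararajan's resolution; in contrast to the one-sided lower bound in Conjecture~\ref{Gthm}, the equality-case classification here requires a complete enumeration of extremal arrangements and is considerably more intricate than the bound itself.
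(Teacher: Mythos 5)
This statement is not proved in the paper at all: it is Graham's second conjecture, which the paper quotes and uses as a black box, citing the (long and analytic) proof of Balasubramanian and Soundararajan \cite{BS}. So the relevant question is whether your proposal constitutes an independent proof, and it does not. Your preliminary reductions are correct and standard: no prime $p>n$ can divide any $a_i$, the valuation spread bound gives $a_i\mid M_n$, and the involution $a_i\mapsto L/a_i$ with $L=\mathrm{lcm}\{a_1,\dots,a_n\}$ does preserve the hypotheses (via $\gcd(L/a,L/b)=L/\mathrm{lcm}(a,b)$) and swaps the two canonical families. But the central step --- ``one argues by induction on $n$, using pigeonhole on the prime-valuation vectors, that the only way to fit $n$ distinct such integers while saturating the inequalities is to take $\{1,2,\dots,n\}$'' --- is exactly the statement to be proved, restated rather than established. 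You acknowledge this yourself in the final paragraph. No induction scheme is set up, no pigeonhole argument is specified, and no mechanism is given for excluding sporadic configurations for $n\ge 5$; that exclusion is the entire difficulty, and in \cite{BS} it consumes dozens of pages of genuinely analytic input (not just combinatorics of valuation vectors).

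There is also a concrete false step: the claim that after applying the involution ``we may assume $1\in\{a_i\}$.'' The involution sends the set to $\{L/a_i\}$, which contains $1$ only if $a_n=L$, i.e.\ only if the largest element is already the lcm of all of them; this is not automatic, and the exceptional set $\{2,3,4,6\}$ (with $L=12$, mapped to itself) is an explicit configuration satisfying all hypotheses in which neither the set nor its image contains $1$. So even the normalization on which your ``heart of the proof'' rests is unavailable. As written, the proposal is a plausible opening reduction followed by a deferral of the theorem to itself.
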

Graham's conjectures have stimulated a number of researches. For instance, Winterle \cite{Winterlez} established Graham's first conjecture in the case when there is a prime among $a_i$'s. V\'{e}lez \cite{Velez} showed that Conjecture \ref{Gthm} is true for $n=p+1$ ($p$ a prime). A proof for $n=p$ was also included in \cite{Velez}, which is due to Szemer\'{e}di. Boyle \cite{Boyle} showed that Conjecture \ref{Gthm} holds when there is a prime $p>(n-1)/2$ dividing some $a_i$. Szegedy \cite{Szegedy} and Zaharescu \cite{Za} independently showed that Graham's conjectures hold for sufficiently large $n$. Cheng and Pomerance \cite{Cheng-P} showed that Graham's second conjecture is true for $n>10^{50000}$. It was Balasubramanian and Soundararajan \cite{BS} who finally confirmed these conjectures based on deep analytical methods.

From Conjecture \ref{Gthm} we get the following result.
\begin{theorem}\label{thm1}
Suppose $S\subseteq {{F}_{n}},$ if $\mathcal{Q}(S)\subseteq {{F}_{n}}$, then $S$ has at most $n+1$ elements.
\end{theorem}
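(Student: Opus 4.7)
The plan is to translate the hypothesis $\mathcal{Q}(S)\subseteq F_n$ into an arithmetic inequality on a finite set of positive integers, and then feed that inequality into Graham's first GCD conjecture (Conjecture~\ref{Gthm}), which is available as a black box.

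First I would dispose of the element $0$: if $0\in S$, it suffices to bound $|S\setminus\{0\}|$ by $n$, so I may assume from the outset that every element of $S$ is a nonzero Farey fraction and aim for $|S|\le n$. Next I would clear denominators uniformly. Write the $m:=|S|$ elements of $S$ in lowest terms as $a_i/b_i$ with $1\le a_i\le b_i\le n$ and $(a_i,b_i)=1$, set $L:=\mathrm{lcm}(b_1,\ldots,b_m)$, and define $c_i:=a_iL/b_i$. These are $m$ distinct positive integers, and for any ordered pair of indices with $c_i\le c_j$, the quotient $(c_i/L)/(c_j/L)=c_i/c_j$ reduces to $(c_i/d)/(c_j/d)$ with $d=(c_i,c_j)$. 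Hence $\mathcal{Q}(S)\subseteq F_n$ becomes exactly
\[
\frac{c_j}{(c_i,c_j)} \le n \qquad \text{whenever } c_i\le c_j.
\]
Running this over both orderings of every unordered pair gives $\max(c_i,c_j)/(c_i,c_j)\le n$ for all $i\neq j$, equivalently $\max_{i\neq j} c_i/(c_i,c_j)\le n$ since that maximum is always attained with $c_i\ge c_j$.

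Finally I would invoke Conjecture~\ref{Gthm} for the $m$ distinct positive integers $c_1,\ldots,c_m$, which yields $\max_{i\neq j} c_i/(c_i,c_j)\ge m$. Combined with the previous upper bound, this forces $m\le n$, and reinstating a possible $0$ gives $|S|\le n+1$.

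The only genuinely hard input is Graham's conjecture itself, quoted as the theorem of Balasubramanian and Soundararajan; everything else is a bookkeeping translation. The one spot that needs a little care is verifying that the asymmetric "$x\le y$" convention in the definition of $\mathcal{Q}(S)$ still recovers the symmetric gcd inequality, but since for any unordered pair of indices one of the two orderings is the relevant one, both directions are captured automatically.
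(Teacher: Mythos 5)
Your proof is correct and follows essentially the same route as the paper's: clear denominators to produce distinct positive integers whose pairwise ratios realize the elements of $\mathcal{Q}(S)$, translate the hypothesis $\mathcal{Q}(S)\subseteq F_n$ into $\max_{i\neq j} c_i/(c_i,c_j)\le n$, and invoke Graham's conjecture. The only (cosmetic) difference is that you scale by the lcm of the denominators so that $c_i$ is proportional to $a_i/b_i$, which makes the reduced form of $c_i/c_j$ immediate, whereas the paper scales by the lcm of the numerators and must compute $(a_i,a_j)$ explicitly.
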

In fact, the above theorem is equivalent to Conjecture \ref{Gthm}.
\begin{theorem}\label{equiv}
Conjecture \ref{Gthm} is equivalent to Theorem \ref{thm1}.
\end{theorem}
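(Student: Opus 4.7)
The plan is to establish both implications. The direction Conjecture \ref{Gthm} $\Rightarrow$ Theorem \ref{thm1} is essentially a translation between a set of reduced fractions in $F_n$ and a set of distinct positive integers, in such a way that ``denominator of a ratio in lowest terms'' corresponds to the Graham quantity $a_i/(a_i,a_j)$. The reverse implication runs the same idea backwards, but needs the extra trick of adjoining $0$ to boost the cardinality from $n$ to $n+1$, which is what makes Theorem \ref{thm1} contradictory.

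For Conjecture \ref{Gthm} $\Rightarrow$ Theorem \ref{thm1}, I would take $S\subseteq F_n$ with $\mathcal{Q}(S)\subseteq F_n$, set $S'=S\setminus\{0\}$, write each $r\in S'$ in lowest terms as $r=x_r/y_r$, put $L=\mathrm{l.c.m.}\{x_r:r\in S'\}$, and define $a_r=Ly_r/x_r$. Since $r\mapsto 1/r$ is injective on $S'$, the $a_r$ are $|S'|$ distinct positive integers. The key computation is the identity
\[
(x_ry_s,\,x_sy_r)=(x_r,x_s)(y_r,y_s),
\]
which follows from $(x_r,y_r)=(x_s,y_s)=1$ by a routine prime-by-prime check. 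This identity shows that for $r\le s$ in $S'$ the denominator of $r/s$ in lowest terms equals $a_r/(a_r,a_s)$, which is $\le n$ because $\mathcal{Q}(S)\subseteq F_n$. Swapping the roles of $r$ and $s$ bounds $a_s/(a_r,a_s)$ as well, so $\max_{r,s}\,a_r/(a_r,a_s)\le n$. Applying Conjecture \ref{Gthm} to the integers $\{a_r\}$ then forces $|S'|\le n$, and hence $|S|\le n+1$.

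For the converse, I would assume Theorem \ref{thm1} and suppose, toward a contradiction, that there exist distinct positive integers $a_1<\cdots<a_n$ with $\max_{i,j}\,a_i/(a_i,a_j)\le n-1$. Setting
\[
S=\{0\}\cup\left\{\frac{a_1}{a_i}:1\le i\le n\right\},
\]
each $a_1/a_i$ reduces to $\bigl(a_1/(a_1,a_i)\bigr)/\bigl(a_i/(a_1,a_i)\bigr)$, whose denominator is $\le n-1$, so $S\subseteq F_{n-1}$. For any two nonzero elements $a_1/a_i,\,a_1/a_j\in S$ with $a_i\ge a_j$, the quotient $a_j/a_i$ has reduced denominator $a_i/(a_i,a_j)\le n-1$, and quotients involving $0$ are trivially $0\in F_{n-1}$, so $\mathcal{Q}(S)\subseteq F_{n-1}$. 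But $|S|=n+1$ contradicts Theorem \ref{thm1} applied with $n$ replaced by $n-1$, which bounds $|S|$ by $n$.

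The main obstacle is settling on the correct integerification $r\mapsto Ly_r/x_r$ in the forward direction; once this map is in place, the coprimality identity above reduces everything to a direct invocation of Conjecture \ref{Gthm}. In the converse, the genuinely new ingredient is the adjunction of $0$ to $S$: without this extra element one only has $n$ rationals, and Theorem \ref{thm1} would give merely $n\le n+1$, hence no contradiction.
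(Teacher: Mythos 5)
Your proposal is correct and follows essentially the same route as the paper: the forward direction is the paper's own proof of Theorem \ref{thm1} (the map $r\mapsto Ly_r/x_r$ and the gcd identity $(x_ry_s,x_sy_r)=(x_r,x_s)(y_r,y_s)$ are exactly the paper's computation of $(a_i,a_j)=\frac{a_{n+1}}{[x_i,x_j]}(y_i,y_j)$ in different notation), and the converse is the paper's construction $S=\{0\}\cup\{a_1/a_i\}$ up to an index shift ($n$ integers in $F_{n-1}$ versus $n+1$ integers in $F_n$). The only cosmetic imprecision is the phrase about ``swapping the roles of $r$ and $s$'': for $r\le s$ only $r/s$ lies in $\mathcal{Q}(S)$, but the bound on $a_s/(a_r,a_s)$ follows anyway since it is the reduced numerator of $r/s$, which does not exceed the reduced denominator.
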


Now we give the answer to our  question proposed before.
\begin{theorem}\label{answer}
Suppose $S \subseteq {F}_n$ and $\mathcal{Q}(S)={F}_{n}$, then $S$ can only be one of the following two sets:
\begin{displaymath}
S = \left\{0,1,\frac{1}{2},\ldots,\frac{1}{n}\right\} \quad
\textrm{ or } \quad S = \left\{0,1,\frac{1}{n},\ldots,\frac{n-1}{n}\right\}.
\end{displaymath}
\end{theorem}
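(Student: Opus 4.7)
The plan is to extract enough rigidity from $\mathcal{Q}(S) = F_n$ to bring Conjecture~\ref{conj2} to bear.
Since $0 \in F_n = \mathcal{Q}(S)$ we immediately get $0 \in S$.
To handle $1/n \in F_n$, write $1/n = x/y$ with $x, y \in S$, $x \le y$, $y \ne 0$, so that $y = nx$; because $x, y \in F_n$ and the smallest positive element of $F_n$ is $1/n$, we are forced to take $x = 1/n$ and $y = 1$.
Hence $\{0, 1/n, 1\} \subseteq S$, and $s_1 := 1/n$ is in fact the smallest positive element of $S$.
Writing $S \setminus \{0\} = \{s_1 < s_2 < \cdots < s_k\}$ with $s_k = 1$ and each $s_i = p_i/q_i$ in lowest terms, set $L = \mathrm{lcm}(q_1, \ldots, q_k)$ and $a_i = s_i L$.
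Then $a_1 < a_2 < \cdots < a_k = L$ are distinct positive integers with $\gcd(a_1, \ldots, a_k) = 1$ (by minimality of $L$), the hypothesis $\mathcal{Q}(S) \subseteq F_n$ is equivalent to $a_j/(a_i, a_j) \le n$ for all $i \le j$, and $1/n \in \mathcal{Q}(S)$ (realised by the pair $(a_1, a_k) = (L/n, L)$) forces $\max_{i,j} a_j/(a_i, a_j) = n$.

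\textbf{Forcing $k = n$.}
Theorem~\ref{thm1} gives $k \le n$; the crux of the argument is the reverse inequality. A first input is the counting bound
\[
\binom{k}{2} \;\ge\; |F_n| - 2,
\]
which holds because each reduced fraction in $F_n \setminus \{0, 1\}$ must arise from at least one (and hence from a distinct) pair $i < j$ as the reduced form of $a_i/a_j$.
This inequality already pins down $k = n$ for the initial values of $n$ where $|F_n| - 2 > \binom{n-1}{2}$.
For the remaining $n$ a structural supplement is required: exploiting that $n \mid L = a_k$, each fraction $p/n \in F_n$ with $\gcd(p, n) = 1$ must be realised by a pair $(p g_p, n g_p) \subseteq \{a_1, \ldots, a_k\}$, and analysing how the $g_p$ can overlap as $p$ varies over residues coprime to $n$ (and repeating the analysis for fractions of denominator $n-1$) forces enough additional elements into $\{a_i\}$ to conclude $k \ge n$. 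This structural step is the main technical obstacle of the proof.

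\textbf{Classification and elimination of the $n = 4$ candidate.}
With $k = n$ established, the integers $a_1 < \cdots < a_n$ satisfy $\gcd(a_1, \ldots, a_n) = 1$ and $\max a_j/(a_i, a_j) = n$, so Conjecture~\ref{conj2} (proved in \cite{BS}) forces $\{a_1, \ldots, a_n\}$ to be one of $\{1, 2, \ldots, n\}$, $\{M_n/n, M_n/(n-1), \ldots, M_n\}$, or, only for $n = 4$, $\{2, 3, 4, 6\}$.
Translating back via $s_i = a_i/L$ with $L = a_n$, these three options correspond respectively to $S = \{0, 1, 1/n, 2/n, \ldots, (n-1)/n\}$, $S = \{0, 1, 1/2, 1/3, \ldots, 1/n\}$, and $S = \{0, 1, 1/3, 1/2, 2/3\}$.
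A direct computation gives $\mathcal{Q}(\{0, 1, 1/3, 1/2, 2/3\}) = \{0, 1/3, 1/2, 2/3, 3/4, 1\}$, which omits $1/4 \in F_4$ and therefore fails the hypothesis $\mathcal{Q}(S) = F_n$.
Hence only the two advertised families survive, completing the proof.
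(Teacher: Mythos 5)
Your setup is sound: the deduction that $\{0,\tfrac1n,1\}\subseteq S$, the passage to integers $a_i=s_iL$ with $\gcd(a_1,\dots,a_k)=1$ and $\max_{i,j}a_j/(a_i,a_j)=n$, and the translation of the three outcomes of Conjecture~\ref{conj2} back into sets $S$ (including the correct elimination of $\{0,1,\tfrac13,\tfrac12,\tfrac23\}$ for $n=4$) are all fine. But there is a genuine gap exactly where you flag ``the main technical obstacle'': you never establish $k=n$, and without that Conjecture~\ref{conj2} cannot be invoked at all. The counting bound $\binom{k}{2}\ge |F_n|-2$ is correct but far too weak to carry the argument: since $|F_n|=1+\sum_{m\le n}\varphi(m)\sim 3n^2/\pi^2\approx 0.304\,n^2$ while $\binom{n-1}{2}\sim n^2/2$, the inequality fails to exclude $k=n-1$ for every $n\ge 8$ (already at $n=8$ one has $\binom{7}{2}=21=|F_8|-2$). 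So the ``structural supplement'' you defer is not a cleanup of finitely many exceptional $n$ but the entire content of the theorem for all but finitely many $n$, and the sketch you give (tracking which pairs realize $p/n$ and $p/(n-1)$) is not carried out and is not obviously completable as stated.

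The paper takes a different route that never needs $|S|=n+1$ as an input. For $n\ge 5$ it chooses, via Bertrand's Postulate, a prime $p$ with $(n+1)/2\le p<n$ and analyzes how $1/p\in\mathcal{Q}(S)$ can be realized: since $2p>n$, the realizing pair must involve an element of $S$ with numerator $p$ or one with denominator $p$. Each of these two cases propagates rigidly through all of $S$ (for instance, in the first case every other element of $S$ is forced to have denominator dividing the denominator $c$ of $p/c$, which then forces $c=n$ and $\{k/n\}_{1\le k\le n}\subseteq S$), yielding the two advertised sets directly, with Theorem~\ref{thm1} used only at the very end to cap $|S|$. If you want to salvage your reduction to Conjecture~\ref{conj2}, you need an actual proof that $\mathcal{Q}(S)=F_n$ forces $|S|=n+1$; some argument of the paper's prime-based type appears unavoidable there.
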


We also have the following result based on Graham's second conjecture.
\begin{theorem}\label{thm-4}
 Suppose $S \subseteq {F}_n$, $|S|=n+1$ and $\mathcal{Q}(S)\subseteq {F}_{n}$, then $S=\left\{0,1,\frac{1}{2},\ldots,\frac{1}{n}\right\}$ or $S=\left\{0,1,\frac{1}{n},\ldots,\frac{n-1}{n}\right\}$ except for $n=4$, where we have an additional set $\{0,1,\frac{1}{2}, \frac{1}{3}, \frac{2}{3}\}$.
\end{theorem}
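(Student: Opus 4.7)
My plan is to reduce Theorem \ref{thm-4} to Conjecture \ref{conj2} via a common-denominator encoding of the rationals in $S$ as integers. The first step is to establish $0 \in S$. Assume for contradiction that $0 \notin S$. Write each element of $S$ in lowest terms and let $L$ be the least common multiple of the denominators; then $a_i := L r_i$ gives $n+1$ distinct positive integers. For any pair with $a_i \le a_j$, the ratio $r_i/r_j = a_i/a_j$ belongs to $\mathcal{Q}(S) \subseteq F_n$, so its lowest-term denominator $a_j/(a_i, a_j)$ is at most $n$; equivalently, $\max_{i,j} a_i/(a_i, a_j) \le n$. This contradicts Conjecture \ref{Gthm} applied to $a_1, \ldots, a_{n+1}$, so $0 \in S$.

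Now $|S \setminus \{0\}| = n$. I apply the same encoding to $S \setminus \{0\} = \{r_1 < \cdots < r_n\}$: $r_i = a_i/L$ with $L$ the LCM of lowest-term denominators $y_i = L/(L, a_i)$. Using the identity $[L/g_1, \ldots, L/g_n] = L/(g_1, \ldots, g_n)$ for $g_i \mid L$, the definition $L = [y_1, \ldots, y_n]$ collapses to $(L, d) = 1$, where $d$ is the gcd of $a_1, \ldots, a_n$. The hypothesis gives $\max_{i,j} a_i/(a_i, a_j) \le n$, and Conjecture \ref{Gthm} upgrades this to an equality. Setting $b_i = a_i/d$ produces $n$ distinct positive integers with gcd $1$ and $\max_{i,j} b_i/(b_i, b_j) = n$; by Conjecture \ref{conj2}, $\{b_1, \ldots, b_n\}$ must be one of $\{1, 2, \ldots, n\}$, $\{M_n/k : 1 \le k \le n\}$, or (when $n=4$) $\{2, 3, 4, 6\}$.

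To recover $S$, note that $(L, d) = 1$ implies $(L, a_i) = (L, b_i)$, so $y_i = L/(L, b_i) \le n$; combined with $r_n = d b_n/L \le 1$, this squeezes $L$ (and $d$) from both sides. In the case $\{b_i\} = \{1, \ldots, n\}$, the element $b = 1$ gives $L \le n$ while $L \ge dn$ forces $d = 1$, $L = n$, and $r_i = i/n$, so $S = \{0, 1/n, 2/n, \ldots, 1\}$. In the case $\{b_i\} = \{M_n/k\}$, the smallest element $b = M_n/n$ gives $L \le M_n$ while $L \ge d M_n$ forces $d = 1$, $L = M_n$, and $r_i = 1/k$, so $S = \{0, 1, 1/2, \ldots, 1/n\}$. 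In the exceptional case $n = 4$ with $\{b_i\} = \{2, 3, 4, 6\}$: the element $b = 2$ gives $L \le 4(L, 2) \le 8$ while $L \ge 6d$ forces $d = 1$ and $L \in \{6, 7, 8\}$; then $b = 3$ requires $L \le 4(L, 3)$, which rules out $L = 7$ and $L = 8$. Hence $L = 6$, $r_i = b_i/6$, and $S = \{0, 1/3, 1/2, 2/3, 1\}$.

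The main difficulty lies in the $n = 4$ exception: in the two generic cases a single extremal element of $\{b_i\}$ matches the upper and lower bounds on $L$ exactly, but in the exceptional case the bounds $6 \le L \le 8$ leave a gap, and one must cross-check with additional $b_i$ to exclude each intermediate value. Everything else is a routine translation between the rational and integer setups, mediated by the identity $(L, d) = 1$ that follows from $L$ being the LCM of lowest-term denominators.
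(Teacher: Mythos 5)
Your proof is correct and follows essentially the same route as the paper: clear denominators to turn $S\setminus\{0\}$ into $n$ distinct positive integers, check via $\mathcal{Q}(S)\subseteq F_n$ that $\max_{i,j} a_i/(a_i,a_j)\le n$, invoke Conjecture \ref{Gthm} to upgrade this to equality and Conjecture \ref{conj2} to classify the integer set, then read off $S$. The only differences are cosmetic --- you scale by $L=[y_1,\dots,y_n]$ where the paper uses reciprocals $a_k=a_n y_k/x_k$, and your recovery step is more explicit than the paper's about normalizing away a possible common factor $d>1$ (via $(L,d)=1$) and pinning down $L$, a point the paper compresses into ``we get the desired conclusion immediately.''
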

In a way similar to the proof of Theorem \ref{equiv}, it is not difficult to show that Theorem \ref{thm-4} also implies Conjecture \ref{conj2}. Since the proof of Graham's conjectures  in \cite{BS} is relatively long and complicate, it is natural to ask whether we can prove Theorems \ref{thm1} and \ref{thm-4} directly and thus providing new proofs for Graham's conjectures.
% -----------------------------------------------------------
\section{Proofs of the Theorems}
\begin{proof}[Proof of Theorem \ref{thm1}]
We prove by contradiction. Suppose there is a subset $S\subseteq \mathcal{F}_{n}$ such that $\mathcal{Q}(S) \subseteq \mathcal{F}_{n}$ but $|S|\ge n+2$. Then $S\backslash\{0,1\}$ has at least $n$ distinct elements $x_k/y_k$ with $({{x}_{k}},{{y}_{k}})=1$ ($1\leq k \leq n$) and
\begin{align*}
\frac{x_1}{y_1}<\frac{x_2}{y_2}<\cdots <\frac{x_n}{y_n}.
\end{align*}
Let
\[S'=\left\{\frac{x_1}{y_1},\ldots,\frac{x_{n}}{y_{n}},1\right\}.\]
It is easy to see that $\mathcal{Q}(S') \subseteq {{F}_{n}}$.

Let $x_{n+1}=y_{n+1}=1$, ${{a}_{n+1}}=[{{x}_{1}},\dots ,{{x}_{n}}]$, ${{a}_{k}}={{a}_{n+1}}\cdot
\frac{{{y}_{k}}}{{{x}_{k}}}$ ($1\le k\le n$). Then
\[a_1>a_2>\cdots>a_{n+1}.\]
Moreover, we have for $1\le j\le n$,
\[(a_{n+1},a_{j})=(\frac{a_{n+1}}{x_j}{x_j}, \frac{a_{n+1}}{x_{j}}{y_j})=\frac{a_{n+1}}{x_j},  \]
and for $1\leq i <j\leq n$,
\begin{align*}
({{a}_{i}},{{a}_{j}}) & =({{a}_{n+1}}\cdot \frac{{{y}_{i}}}{{{x}_{i}}},
{{a}_{n+1}}\cdot \frac{{{y}_{j}}}{{{x}_{j}}})\\
&=\frac{{{a}_{n+1}}}{[{{x}_{i}},{{x}_{j}}]}\cdot ({{y}_{i}},{{y}_{j}})\left(\frac{[{{x}_{i}},{{x}_{j}}]}{{{x}_{i}}}\cdot \frac{{{y}_{i}}}{({{y}_{i}},{{y}_{j}})},\frac{[{{x}_{i}},{{x}_{j}}]}{{{x}_{j}}}\cdot \frac{{{y}_{j}}}{({{y}_{i}},{{y}_{j}})}\right).
\end{align*}
Note that
$\frac{[{{x}_{i}},{{x}_{j}}]}{{{x}_{i}}}|{{x}_{j}},\frac{{{y}_{j}}}{({{y}_{i}},{{y}_{j}})}|{{y}_{j}},({{x}_{j}},{{y}_{j}})=1$, we have
$(\frac{[{{x}_{i}},{{x}_{j}}]}{{{x}_{i}}},\frac{{{y}_{j}}}{({{y}_{i}},{{y}_{j}})})=1$.  Similarly we have $(\frac{{{y}_{_{i}}}}{({{y}_{i}},{{y}_{j}})},\frac{[{{x}_{i}},{{x}_{j}}]}{{{x}_{j}}})=1$.  Hence for $1\leq i <j \leq n+1$,
\begin{align}
({{a}_{i}},{{a}_{j}})=\frac{{{a}_{n+1}}}{[{{x}_{i}},{{x}_{j}}]}({{y}_{i}},{{y}_{j}})
\end{align}
and
\begin{align}\label{gcd-eq}
\frac{{{a}_{i}}}{({{a}_{i}},{{a}_{j}})}=\frac{{{a}_{i}}}{{{a}_{n+1}}}\cdot \frac{[{{x}_{i}},{{x}_{j}}]}{({{y}_{i}},{{y}_{j}})}=\frac{{{y}_{i}}}{{{x}_{i}}}\cdot \frac{[{{x}_{i}},{{x}_{j}}]}{({{y}_{i}},{{y}_{j}})}.
\end{align}
On the other hand,
\[\frac{{{x}_{i}}}{{{y}_{i}}}/\frac{{{x}_{j}}}{{{y}_{j}}}=\frac{{{x}_{i}}{{y}_{j}}}{{{x}_{j}}{{y}_{i}}}=\frac{{{x}_{i}}}{({{x}_{i}},{{x}_{j}})}\cdot \frac{{{y}_{j}}}{({{y}_{i}},{{y}_{j}})}/\left(\frac{{{x}_{j}}}{({{x}_{i}},{{x}_{j}})}\cdot \frac{{{y}_{i}}}{({{y}_{i}},{{y}_{j}})}\right),\]
where the fraction in the right side is in its lowest term. Since $\mathcal{Q}(S')\subseteq {F}_{n}$, we have
\[\frac{{{x}_{j}}}{({{x}_{i}},{{x}_{j}})}\cdot \frac{{{y}_{i}}}{({{y}_{i}},{{y}_{j}})}\le n.\]
This implies
\[\frac{[{{x}_{i}},{{x}_{j}}]}{x_{i}}\cdot \frac{{{y}_{i}}}{({{y}_{i}},{{y}_{j}})}\le n.\]
Utilizing this in \eqref{gcd-eq}, we deduce that for $1\leq i <j \leq n+1$,
\begin{align*}
 \frac{{{a}_{i}}}{({{a}_{i}},{{a}_{j}})}\le n.
\end{align*}
This contradicts with Conjecture \ref{Gthm}.
\end{proof}

\begin{proof}[Proof of Theorem \ref{equiv}]
We have already shown that Conjecture \ref{Gthm} implies Theorem \ref{thm1}.

Now we assume Theorem \ref{thm1} and give a  proof of Conjecture \ref{Gthm}.

Conjecture 1 is obviously true when $n=1$. For $n\ge 2$, we prove by contradiction. Suppose for some integer $n+1$ ($n\ge 1$), Conjecture 1 does not hold. Then there are $n+1$ distinct positive integers ${{a}_{1}}<{{a}_{2}}<\cdots <{{a}_{n+1}}$ such that
\begin{align}\label{gcd-cond}
\underset{i,j}{\mathop{\max }}\,\frac{{{a}_{i}}}{({{a}_{i}},{{a}_{j}})}\le n.
\end{align}
Let ${{x}_{k}}=\frac{{{a}_{1}}}{({{a}_{1}},{{a}_{k}})}$ and ${{y}_{k}}=\frac{{{a}_{k}}}{({{a}_{1}},{{a}_{k}})}$ ($1\le k\le n+1$). Then $({{x}_{k}},{{y}_{k}})=1$, ${{x}_{k}}\leq {{y}_{k}}\le n$, and $\frac{{{x}_{k}}}{{{y}_{k}}}=\frac{{{a}_{1}}}{{{a}_{k}}}$ ($1\le k\le n+1$) are distinct reduced fractions. Thus $S=\{0,\frac{{{x}_{1}}}{{{y}_{1}}},\frac{{{x}_{2}}}{{{y}_{2}}},\cdots ,\frac{{{x}_{n+1}}}{{{y}_{n+1}}}\}$ is a subset of $F_n$. By \eqref{gcd-cond} we have
\[\frac{{{x}_{j}}}{{{y}_{j}}}/\frac{{{x}_{i}}}{{{y}_{i}}}=\frac{{{a}_{1}}/{{a}_{j}}}{{{a}_{1}}/{{a}_{i}}}=\frac{{{a}_{i}}}{{{a}_{j}}}=\frac{{{a}_{i}}/({{a}_{i}},{{a}_{j}})}{{{a}_{j}}/({{a}_{i}},{{a}_{j}})}\in F_{n}.\]
Hence $\mathcal{Q}(S)\subseteq F_{n}$. But $|S|\ge n+2$, which contradicts with Theorem \ref{thm1}. Therefore, Conjecture \ref{Gthm} follows from Theorem \ref{thm1}.
\end{proof}

\begin{proof}[Proof of Theorem \ref{answer}]
It is easy to check that $\mathcal{Q}(S)={{F}_{n}}$ when $S=\{0,1,\frac{1}{2},\cdots ,\frac{1}{n}\}$  or $S=\{0,1,\frac{1}{n},\frac{2}{n},\cdots ,\frac{n-1}{n}\}$.
Now we are going to show that there are no other such subsets.

When $n\le 4$, the theorem can be proved by direct verification.

When $n\ge 5$, since $0\in {{F}_{n}}$, we must have $0\in S$.  By  Bertrand's Postulate, there is  a prime $p$ satisfying $\frac{n+1}{2}\leq p< n$. Let ${{\overline{{F}}}_{n}}$ denote the set of all nonnegative fractions with both denominator and numerator no bigger than $n$. Since $1/p\in F_n$ and $\mathcal{Q}(S)={{F}_{n}}$, there are two reduced fractions $\frac{{{x}_{1}}}{{{y}_{1}}},\frac{{{x}_{2}}}{{{y}_{2}}}\in S$ such that $\frac{{{x}_{1}}}{{{y}_{1}}}/\frac{{{x}_{2}}}{{{y}_{2}}}=\frac{1}{p}$, i.e., $p{{x}_{1}}{{y}_{2}}={{x}_{2}}{{y}_{1}}$. Thus either $x_2$ or $y_1$ is divisible by $p$. Since $2p\ge n+1$, we must have ${{x}_{2}}=p$ or ${{y}_{1}}=p$.

{\bf Case 1}:  ${{x}_{2}}=p$. In this case, $x_1y_2=y_1$. Since $({{x}_{1}},{{y}_{1}})=1$, we  have ${{x}_{1}}=1$ and $y_1=y_2$. Hence there is an integer $c=y_1=y_2$ ($p<c\le n$) such that $\frac{p}{c}\in S$ and $\frac{1}{c}\in S$.  Suppose $\frac{p}{x}\in S$ ($p<x\le n$), then for any reduced fraction $\frac{a}{b}\in S$, we have $\frac{a}{b}/\frac{p}{x}=\frac{ax}{bp}\in {\overline{F}_{n}}$. Because $(a,b)=(x,p)=1$, we know that $p=a$ or $b|x$. In particular, $\frac{1}{c}\in S$ implies $c|x$. But $2c>2p\ge n+1$, we must have $x=c$. Therefore, $S$ can be written as
\[S=\left\{\frac{{{a}_{1}}}{{{b}_{1}}},\frac{{{a}_{2}}}{{{b}_{2}}},\cdots ,\frac{{{a}_{s}}}{{{b}_{s}}}\right\}\bigcup \left\{\frac{p}{c}, 0\right\},\]
where $({{a}_{i}},{{b}_{i}})=1$, $({{a}_{i}},p)=1$, $b_i|c$ ($1\le i\le s$).

Note that $\frac{{{a}_{i}}}{{{b}_{i}}}/\frac{{{a}_{j}}}{{{b}_{j}}}=\frac{{{a}_{i}}{{b}_{j}}}{{{a}_{j}}{{b}_{i}}}$. We see that both the numerator and denominator in the lowest term of this fraction are not divisible by $p$. Since $\frac{x}{p}\in {{\overline{{F}}}_{n}}$ ($1\le x\le n,x\ne p$), it can only be represented as $\frac{{{a}_{i}}}{{{b}_{i}}}/\frac{p}{c}=\frac{{{a}_{i}}c}{p{{b}_{i}}}$. However, the numerator  of $\frac{{{a}_{i}}c}{p{{b}_{i}}}$  is $\frac{{{a}_{i}}c}{{{b}_{i}}}\leq c$. Since $\frac{n}{p}\in {{\overline{{F}}}_{n}}$, we must have $c=n$.

For any $1\le x\le n$, $x\ne p$, there is some $\frac{{{a}_{i}}}{{{b}_{i}}}$ such that $\frac{x}{p}=\frac{{{a}_{i}}}{{{b}_{i}}}/\frac{p}{n}=\frac{n{{a}_{i}}}{p{{b}_{i}}}$, i.e., $\frac{{{a}_{i}}}{{{b}_{i}}}=\frac{x}{n}$. Therefore,
we know that $\{0\}\cup {{\{\frac{k}{n}\}}_{1\le k\le n}}\subseteq S$. By Theorem \ref{thm1} we have $|S|\le n+1$. Hence $S={{\{\frac{k}{n}\}}_{0\le k\le n}}$.

{\bf Case 2}:  ${{y}_{1}}=p$. In this case, ${{x}_{1}}{{y}_{2}}={{x}_{2}}$. Since $({{x}_{2}},{{y}_{2}})=1$, we must have $y_2=1$. It follows that $x_1=x_2=1$.  For any reduced fraction $a/b\in S$, we have $\frac{a}{b}/\frac{1}{p}=\frac{ap}{b}\in \overline{F}_n$. We must have $b=p$ or $a=1$. Thus we can write
\[
S=\left\{0, 1,\frac{1}{{{p}}}\right\}\bigcup \left\{\frac{{{a}_{1}}}{{{p}}},\frac{{{a}_{2}}}{{{p}}},\cdots ,\frac{{{a}_{t}}}{{{p}}}\right\}\bigcup \left\{\frac{1}{{{b}_{1}}},\frac{1}{{{b}_{2}}},\cdots ,\frac{1}{{{b}_{r}}}\right\}.
\]
Here $1<{{a}_{1}}<{{a}_{2}}<\cdots <{{a}_{t}}<p$ and $1<{{b}_{1}}<{{b}_{2}}<\cdots <{{b}_{r}}\leq n$ and $b_i\neq p$ for all $1\leq i \leq r$.

The smallest positive number in $F_n$ is $\frac{1}{n}$. Since $p<n$, the smallest positive number in $\mathcal{Q}(S)$ should be $\frac{1}{b_r}$ and $b_r=n$. If $t\geq 1$, then $\frac{a_1b_r}{p}\in \overline{F}_{n}$, i.e., $\frac{a_1n}{p}\in \overline{F}_{n}$, we get ${{a}_{1}}=1$, which is a contradiction. Therefore, $t=0$ and
\[
S=\left\{0, 1,\frac{1}{{{p}}}\right\}\bigcup \left\{\frac{1}{{{b}_{1}}},\frac{1}{{{b}_{2}}},\cdots ,\frac{1}{{{b}_{r}}}\right\}.
\]
For $1\leq k \leq n$, we have $\frac{k}{p}\in \overline{F}_n$. Since $F_n=\mathcal{Q}(S)$, we deduce that $\{1,2,\dots, n-1\}\backslash \{p\}\subseteq \{b_1,b_2,\dots,b_{r-1}\}$. Hence $S=\left\{0,1,\frac{1}{2},\dots, \frac{1}{n}\right\}$.
\end{proof}

%If $p=n$, then $p$ does not divide $a_ib_j$. If $t\geq 1$ and $r\geq 1$,  the largest  number in the set ${{\{{{a}_{i}}{{b}_{j}}\}}_{1\le i\le t,1\le j\le r}}$ is $a_tb_r$ and hence $a_tb_r=n-1$. If $r=1$, then $\frac{n-2}{n}$ can only be represented as $\frac{a_{t-1}b_1}{n}$ and hence $a_{t-1}b_1=n-2$. So ${{a}_{t}}$ is a common divisor of $n-1$ and $n-2$, and we must have $a_t=1$, which is a contradiction. If $r\geq 2$, then $\frac{n-2}{n}$ can only be represented as $\frac{a_tb_{r-1}}{n}$ or $\frac{a_{t-1}b_r}{n}$, i.e., $a_{t}b_{r-1}=n-2$ or $a_{t-1}b_r=n-2$. The first case again leads to a contradiction that $a_t=1$. So we must have ${{a}_{t-1}}{{b}_{r}}=n-2$.  This implies ${{b}_{r}}=1$, which is a contradiction.
%
%From the above discussion we conclude that either $t=0$ or $r=0$. If $t=0$, then
%\[S=\left\{0,\frac{1}{p}\right\}\bigcup\left\{\frac{1}{b_1},\dots, \frac{1}{b_r}\right\}.\]
%From $\frac{k}{p}\in \overline{F}_n=\mathcal{Q}(S)$, we deduce that $\{1,2,\dots, n-1\}\backslash \{p\}\subseteq \{b_1,b_2,\dots,b_r\}$ and hence $S=\{0,1,\frac{1}{2},\dots, \frac{1}{n}\}$.
%
%If $r=0$, then
%\[
%S=\left\{0, 1,\frac{1}{{{p}}}\right\}\bigcup \left\{\frac{{{a}_{1}}}{{{p}}},\frac{{{a}_{2}}}{{{p}}},\cdots ,\frac{{{a}_{t}}}{{{p}}}\right\}.
%\]
%Since $\frac{k}{p}\in \overline{F}_n=\mathcal{Q}(S)$, we deduce that $\{2,3,\dots, n-1\}\backslash\{p\}\subseteq \{a_1,a_2,\dots,a_t\}$. Therefore,
%\[
%S=\left\{0,1,\frac{1}{n},\cdots ,\frac{n-1}{n}\right\}. \qedhere
%\]

\begin{proof}[Proof of Theorem \ref{thm-4}]
From Theorem \ref{thm1} we know that $\{0,1\}\subseteq S$. Besides 0 and 1, let other elements in $S$ be
\[\frac{x_1}{y_1}<\frac{x_2}{y_2}<\cdots <\frac{x_{n-1}}{y_{n-1}}.\]
Let $x_{n}=y_{n}=1$, ${a}_{n}=\mathrm{l.c.m.}\{x_1,x_2,\dots,x_{n-1}\}$, ${{a}_{k}}={{a}_{n}}\cdot
\frac{{{y}_{k}}}{{{x}_{k}}}(1\le k<n)$. Then
\[a_1>a_2>\cdots>a_{n}.\]
As in the proof of Theorem \ref{thm1}, since $\mathcal{Q}(S)\subseteq F_n$, we see that
\begin{align*}
\underset{i,j}{\mathop{\max }}\,\frac{{{a}_{i}}}{({{a}_{i}},{{a}_{j}})}\le n.
\end{align*}
By Conjecture \ref{Gthm}, we know that
\begin{align*}
\underset{i,j}{\mathop{\max }}\,\frac{{{a}_{i}}}{({{a}_{i}},{{a}_{j}})}= n.
\end{align*}
Therefore, by Conjecture \ref{conj2}, we see that $\{a_1,a_2\cdots, a_{n}\}$ must be one of $\{n,n-1,\dots,1\}$ or $\{\frac{M_{n}}{1},\frac{M_{n}}{2},\dots,\frac{M_{n}}{n}\}$ except for $n=4$, where we have an additional sequence $\{6,4,3,2\}$. The sequence $\{6,4,3,2\}$ corresponds to $S=\{0,1,\frac{1}{2},\frac{1}{3},\frac{2}{3}\}$.  Since $\frac{x_k}{y_k}=\frac{a_n}{a_k}$ ($1\leq k \leq {n-1}$), we get the desired conclusion immediately.
\end{proof}

\subsection*{Acknowledgements}
The author was partially supported by the National Natural Science Foundation of China (11801424) and a start-up research grant of the Wuhan University.

\end{document}